\documentclass[12pt]{article}
\pagestyle{headings}

\usepackage{amsmath, amscd, amsfonts, amssymb, amsthm, tikz, times}
\RequirePackage[colorlinks,citecolor=blue,urlcolor=blue,linkcolor=blue]{hyperref}
\textheight=21cm
\textwidth=16cm
\voffset=-1cm
\hoffset=-1,5cm
\parskip=4mm

\newtheorem{theorem}{Theorem}[section]

\newtheorem{lemma}[theorem]{Lemma}

\newtheorem{definition}[theorem]{Definition}

\def\1{1}
\def\2{1_{[0,A_t]}}

\def\sbs{{S$\beta$S}}
\def\sas{{S$\al$S}}
\def\supp{\text{supp}}
\def\ed{\stackrel{d}{=}}
\def\t1{\tilde{1}}

\def\ff{\infty}

\def\aa{\alpha}

\def\Om{\Omega}

\def\al{\alpha}
\def\bb{\beta}
\def \bl{\begin{lemma}}
\def \el{\end{lemma}}

\def \bt{\begin{theorem}}
\def \et{\end{theorem}}


\def\be{\begin{equation}}
\def\ee{\end{equation}}
\def\bea{\begin{eqnarray}}
\def\eea{\end{eqnarray}}
\def\nn{\nonumber}
\def\eps{\epsilon}
\def\({\left(}
\def\){\right)}
\def\[{\left[}
\def\]{\right]}

\def\lb{\left|}
\def\rb{\right|}

\def\sign{\text{sign}}


\def\N{\mathbb{N}}
\def\R{\mathbb{R}}
\def\Z{\mathbb{Z}}

\def\F{\mathcal{F}}

\def \P{\mathbf{P}}
\def \E{\mathbf{E}}

\def \BB{{\cal B}}

\begin{document}

\title{Indicator fractional stable motions}
\author{Paul Jung
\footnote{Supported in part by Sogang University research grant 200910039}\\Department of Mathematics\\
  Sogang University\\
  Seoul, South Korea 121-742
   }

\date{\today} \maketitle
\abstract{Using the framework of random walks in random scenery, Cohen and Samorodnitsky (2006) introduced a family of symmetric $\aa$-stable motions called local time fractional stable motions.  When $\aa=2$, these processes are precisely fractional Brownian motions with $1/2<H<1$. Motivated by random walks in alternating scenery, we find a ``complementary" family of symmetric $\aa$-stable motions which we call indicator fractional stable motions. These processes are complementary to local time fractional stable motions in that when $\aa=2$, one gets fractional Brownian motions with $0<H<1/2$. \\
}

\textit{Keywords:} fractional Brownian motion; random walk in random scenery; random reward schema; local time fractional stable motion; self-similar process; stable process
\\


\tableofcontents

\section{Introduction}
There are a plethora of integral representations for Fractional Brownian motion (FBM) with Hurst parameter $H\in(0,1)$, and not surprisingly there are
several generalizations of these integral representations to stable processes.  These generalizations are often called fractional symmetric $\aa$-stable (\sas) motions, with $0<\aa<2$, and they can be considered analogs of FBM.
Two common fractional \sas\ motions include
linear fractional stable motion (L-FSM) and real harmonizable fractional stable motion (RH-FSM).

In \cite{CS}, a new generalization of FBM, $H>1/2$, called {\it local time fractional stable motion} (LT-FSM) was introduced. LT-FSM is particularly interesting because it is a subordinated process (this terminology is taken from Section 7.9 of \cite{samorodnitsky1994stable} and should not be confused with subordination in the sense of time-changes).  Subordinated processes are processes constructed from integral representations
with random kernels, or said another way, where the stable random measure (of the integral representation) has a control measure related in some way to a probability measure of some other stochastic process (see Section \ref{sec:model} below). We note that subordinated processes are examples of what are known in the literature as doubly stochastic models.

In this work we introduce another subordinated process which can be considered a natural extension of LT-FSM to $H<1/2$. The processes we consider have random kernels of a very simple type, namely the indicator function
\be\label{def:signed version}
\1_{[A_0,A_t]}(x) \quad ([A_s,A_t]:=[A_t,A_s] \text{ if } A_t<A_s)
\ee
with respect to some self-similar stationary increment (SSSI) process $A_t$.
As such we call these processes {\it indicator fractional stable motions} (I-FSM).

I-FSM's relation to LT-FSM comes from the idea that the indicator function of a real-valued process $A_t$ can be thought of as an alternating version of the local time of $A_t$ in the following way.  Suppose $S_n$, with $S_0=0$, is a discrete-time simple random walk on $\Z$. If $e$ is the edge between $k$ and $k+1$, then the discrete local time of $S_n$ at $e$ is the total number of times $S_n$ has gone from either $k$ to $k+1$ or from $k+1$ to $k$, up to time $n$. Now, instead of {\it totaling} the number of times $S_n$ crosses over edge $e$, one can consider the {\it parity} of the number of times $S_n$ crosses $e$ up to time $n$.
The parity of the discrete local time at edge $e$ up to time $n$ is odd if and only if $e$ is between $0$ and $S_n$. Thus, heuristically, the edges which contribute to an ``alternating local time" are those edges which lie between $0$ and $A_t$. This heuristic is discussed more rigorously in \cite{JM}.

We can generalize the motivational discrete model to all random walks on $\Z$.  In this case, when $S_n$ goes from $x$ to $y$ on a given step, it ``crosses" all edges in between. In terms of the discrete local time, we heuristically think of the random walk as having spent a unit time at {\it all} edges between $x$ and $y$ during that time-step.  

The first question one must ask is: are these new stable processes a legitimate new class of processes or are they just a different representation of L-FSMs and/or RH-FSMs?  Using characterizations of the generating flows for the respective processes (see Section \ref{sec:background} below), \cite{CS} showed that the class of LT-FSMs is disjoint from the classes of RH-FSMs and L-FSMs.  Following their lead, we use the same characterizations to show that when the (discretized) subordinating process $\{A_n\}_{n\in\N}$ is recurrent, the class of I-FSMs is also disjoint from the two classes, RH-FSMs and L-FSMs. Since I-FSMs and LT-FSMs have disjoint self-similarity exponents when $1<\aa<2$, these two classes of processes are also disjoint when $1<\aa<2$. For $\aa<1$, the class of I-FSMs has a strictly larger self-similarity range than the class of LT-FSMs.

The outline of the rest of the paper is as follows. In the next section we define I-FSMs and show that they are \sas-SSSI processes. In Section \ref{sec:background} we give the necessary background concerning generating flows and characterizations with respect to them.  In Section \ref{sec:ergodic}, we give the classification of I-FSMs according to their generating flows along with a result on the mixing properties of the stable noise associated with an I-FSM.

\section{Indicator fractional stable motions}\label{sec:model}
Let $m$ be a $\sigma$-finite measure on a measurable space $(B,\BB)$, and let $$\BB_0 = \{A\in \BB: m(A)<\infty\}.$$
\begin{definition}\label{def:random measure}
A \emph{\sas\ random measure} $M$ with \emph{control measure} $m$ is a $\sigma$-additive set function on $\BB_0$ such that for all $A_i\in \BB_0$
\begin{enumerate}
\item $M(A_1)\sim S_{\aa}(m(A_1)^{1/\aa})$
\item $M(A_1)$ and $M(A_2)$ are independent whenever $A_1 \cap A_2 =\emptyset$
\end{enumerate}
where $S_\aa(\sigma)$ is a \sas\ random variable with scale parameter $\sigma$ (see Section 3.3 of \cite{samorodnitsky1994stable} for more details).
\end{definition}
Another way to say the second property above is to say that $M$ is {\it independently scattered}.

For context, let us first define LT-FSM. Throughout this paper
$$\lambda := \text{Lebesgue measure on }\R.$$
Let $(\Omega',\F',\P')$ support a subordinating process $A_t$. $A_t$ is either a FBM-${H'}$ or a \sbs-Levy motion, $\beta\in(1,2]$, with jointly continuous local time $L_A(t,x)(\omega')$. By self-similarity, $A_0=0$ almost surely. Suppose a \sas\ random measure $M$ with control measure
$\P'\times\lambda$ lives on some other probability space $(\Omega,\F,\P)$. An LT-FSM is a process
\be\label{def:LTFSM}
X^{H}_A(t) := \int_{\Om'}\int_{\R} L_A(t,x)(\omega') M(d\omega', dx),\ t\ge 0,
\ee
where $X^{H}_{A}(t)$ is a \sas-SSSI process with self-similarity exponent $H = 1-{H'}+{H'}/\aa$ and $H'$ is the self-similarity exponent of $A_t$
(see Theorem 3.1 in \cite{CS} and Theorem 1.3 in \cite{DG}).

We now define I-FSM which is the main subject of this work. Let $(\Omega',\F',\P')$ support $A_t$, a non-degenerate \sbs-SSSI process with $\beta\in(1,2]$ and self-similarity exponent ${H'}\in(0,1)$ (again by self-similarity $A_0=0$ almost surely). Suppose a \sas\ random measure $M$ with control measure
$\P'\times\lambda$ lives on some other probability space $(\Omega,\F,\P)$.


An {\bf indicator fractional stable motion} is a process
\be\label{def:IFSM}
Y^{H}_{A}(t) := \int_{\Om'}\int_{\R} 1_{[0,A_t(\omega')]}(x) M(d\omega', dx),\ t\ge 0.
\ee
A nice observation is that the finite dimensional distributions of the process do not change if we replace the kernel $1_{[0,A_t(\omega')]}(x)$ with $\sign(A_{t}(\omega'))1_{[0,A_{t}(\omega')]}(x)$:
\bea\label{eqn:sign}
&&\sum_{j=1}^n\theta_j\int_{\Om'}\int_{\R} \sign(A_{t_j}(\omega'))1_{[0,A_{t_j}(\omega')]}(x) M(d\omega', dx)\nonumber\\
&=& \sum_{j=1}^n\theta_j\int_{\Om'}\int_{\R^+} 1_{\{\omega':A_{t_j}(\omega')>0\}}1_{[0,A_{t_j}(\omega')]}(x) M(d\omega', dx)\nonumber\\
&& + \sum_{j=1}^n\theta_j\int_{\Om'}\int_{\R^-} -1_{\{\omega':A_{t_j}(\omega')<0\}}1_{[0,A_{t_j}(\omega')]}(x) M(d\omega', dx)\nonumber\\
&\ed&\sum_{j=1}^n\theta_j\int_{\Om'}\int_{\R} 1_{[0,A_{t_j}(\omega')]}(x) M(d\omega', dx).
\eea
where the last line holds since $M$ is both symmetric and independently scattered.

The reason that this is helpful is because the equality
\be\label{sign_rep}
\sign(A_t)\1_{[0,A_t]}(x) = (A_t-x)_+^0 - (-x)_+^0
\ee
makes it intuitively clear that the increments of $Y^{H}_{A}(t)$ are stationary.

We note that both LT-FSM and I-FSM can technically be extended to the case where $A_t$ has self-similarity exponent $H'=1$. In these degenerate cases, the kernels for LT-FSM and I-FSM coincide becoming the non-random family of functions $\{1_{[0,t]}\}_{t\ge 0}$ thereby giving us
$$\int_{\R} 1_{[0,t]} \,M(dx),\ t\ge 0 .$$
These are the \sas\ Levy motions with $\aa\in(0,2)$.
\begin{theorem}\label{thm:SSSI}
The process $Y^{H}_A(t)$ is a well-defined \sas-SSSI process with self-similarity exponent $H=H'/\aa$.
\end{theorem}
\begin{proof}
We start by noting that
\bea
\nn\int_{\Om'}\int_{\R} |1_{[0,A_t(\omega')]}(x)|^\aa \, dx \,\P'(d\omega') &=& \E'\int_{\R} 1_{[0,A_t(\omega')]}(x)\, dx\\
&=& \E'|A_t|<\infty
\eea
where the finite expectation follows since $A_t$ is a \sbs\ process with $\beta>1$.
This shows that $Y^{H}_A(t)$ is a well-defined \sas\ process (see Section 3.2 of \cite{samorodnitsky1994stable} for details).

Recall that the control measure for $M$ is $\P'\times\lambda$. Using the alternative kernel given in \eqref{eqn:sign}, by Proposition 3.4.1 in \cite{samorodnitsky1994stable} we have
for $\theta_j\in\R$ and times $t_j,s_j\in\R^+$:
\bea\label{eq:ST char func def}
&&\E \exp\(i\sum_{j=1}^k \theta_j (Y^{H}_A(t_j)-Y^{H}_A(s_j))\) \nonumber\\
&=& \exp\(-\int_\R \E'\lb\sum_{j=1}^k\theta_j\cdot\sign(A_{t_j}-A_{s_j})\,\1_{[A_{s_j},A_{t_j}]}(x)\rb^\aa\,dx\).
\eea
Note that if we had not used the alternative kernel given in \eqref{eqn:sign}, then the right-side above would have been more complicated.

Using \eqref{eq:ST char func def}, we have
\bea
&&\nn\E \exp\(i\sum_{j=1}^k \theta_j (Y^{H}_A(t_j+h)-Y^{H}_A(h))\)\\
&=&\nn  \exp\(-\int_\R \E'\lb\sum_{j=1}^k\theta_j\cdot\sign(A_{t_j+h}-A_{h})\1_{[A_{h},A_{t_j+h}]}(x)\rb^\aa\,dx\)\\
&=&\nn \exp\(-\int_\R \E'\lb\sum_{j=1}^k\theta_j\cdot\sign(A_{t_j})\1_{[0,A_{t_j}]}(x)\rb^\aa\,dx\)\\
&=& \E \exp\(i\sum_{j=1}^k \theta_j Y^{H}_A(t_j)\)
\eea
where the second equality follows since $A_t$ has stationary increments.  The above
shows that $Y^{H}_A(t)$ has stationary increments.

Using \eqref{eq:ST char func def} once more, the self-similarity of $\{A_t\}_{t\ge 0}$, and the change of variables $y=c^{-H'}x$, we obtain
\bea
\nn\E \exp\(i\sum_{j=1}^k \theta_j Y^{H}_A(ct_j)\) &=&  \exp\(-\int_\R \E'\lb\sum_{j=1}^k\theta_j\cdot\sign(A_{ct_j})\1_{[0,A_{ct_j}]}\rb^\aa\,dx\)\\
&=&\nn \exp\(-c^{H'}\int_\R \E'\lb\sum_{j=1}^k\theta_j\cdot\sign(A_{t_j})\1_{[0,A_{t_j}]}\rb^\aa\,dy\)\\
&=& \E \exp\(i\sum_{j=1}^k \theta_j c^{H'/\aa}Y^{H}_A(t_j)\)
\eea
\end{proof}

\vspace{1.35cm}
\noindent \emph{Remarks.}
\begin{enumerate}
\item For each fixed $0<\aa<2$, I-FSM is a class of \sas-SSSI processes with self-similarity exponents $H$ in the feasibility range
$0<H < 1/\aa$.
In particular, when $1<\aa<2$, this range of feasible $H$ complements that of LT-FSM which has the feasibility range $1/\aa<H<1$.
When $0<\aa<1$, the feasibility range  $0<H<1/\aa$ of I-FSM is strictly bigger than that of LT-FSM: $1<H<1/\aa$.

\item It is not hard to see that I-FSMs are continuous in probability since the subordinating process $A_t$ is SSSI and continuous in probability. However, it follows from Theorem 10.3.1 in \cite{samorodnitsky1994stable} that I-FSMs are not sample continuous.  This is intuitive since I-FSMs should have continuity properties similar to those of \sas\ Levy motions since the latter have the form
    \be
\int_{\R^+} \1_{[0,t]}(x) M(dx),\ t\ge 0
\ee
where $M$ is a \sas\ random measure with Lebesgue control measure.

\item By Theorem 11.1.1 in \cite{samorodnitsky1994stable} an I-FSM has a measurable version if and only if the subordinating process $A_t$ has a measurable version.
\end{enumerate}

\section{Background: Ergodic properties of flows}\label{sec:background}
Throughout this section we suppose that $0<\aa<2$. The general integral representations of
$\aa$-stable processes, of the type
\begin{equation} \label{e:stab.process}
X(t) = \int_E f_t(x) \; M(dx), \; t\in T
\end{equation}
 ($T=\Z$ or $\R$) are well-known (see the introduction of \cite{samorodnitsky2005null}).
Here $M$ is a \sas\ random measure on $E$ with a $\sigma$-finite
control measure $m$, and $f_t\in L^\aa(E,m)$ for each $t$.
We call $\{f_t(x)\}_{t\in T}$ a {\it spectral representation} of $\{X(t)\}$.

\begin{definition}
A measurable family of functions $\{\phi_t\}_{t\in T}$ mapping $E$
onto itself and such that
\begin{enumerate}
\item
$\phi_{t+s}(x) =\phi_t(\phi_s(x))$ for all $t,s\in T$ and $x\in
E$,
\item $\phi_0(x)=x$ for all  $x\in E$
\item $m\circ \phi_t^{-1}\sim m$
for all $t\in T$
\end{enumerate}
is called a \emph{nonsingular flow}.
A measurable family $\{a_t\}_{t\in T}$ is called a \emph{cocycle} for the flow $\{\phi_t\}_{t\in T}$
if for
every $s,t\in T$ we have
\be
a_{t+s}(x) = a_s(x)a_t(\phi_s(x))\ m\text{-a.e.}.
\ee
\end{definition}

In \cite{rosinski1995structure}
it was shown that in the case of measurable stationary
\sas\ processes one can choose the (spectral) representation in
(\ref{e:stab.process}) to be of the form
\begin{equation} \label{e:station.kernel}
f_t(x) = a_t(x) \,\left( \frac{dm\circ
\phi_t}{dm}(x)\right)^{1/\aa}  f_0\circ \phi_t(x)
\end{equation}
where $f_0\in L^\aa(E,m)$,  $\{\phi_t\}_{t\in T}$ is a nonsingular flow, and $\{a_t\}_{t\in T}$ is a cocycle, for $\{\phi_t\}_{t\in T}$, which takes values in $\{-1,1\}$.
Also, note that one may always assume the following full support condition:
\be\label{cond:support}
\supp\{f_t:t\in T\}=E.
\ee

Henceforth we shall assume that $T=\Z$ and will write $f_n$, $\phi_n$, and $X(n)$. Note that in the discrete case we may always assume measurability of the process (see Section 1.6 of \cite{aaronson1997introduction}). Given a representation of the form \eqref{e:station.kernel}, we say that $X(n)$ is generated by $\phi_n$.

In \cite{rosinski1995structure} and \cite{samorodnitsky2005null}, the ergodic-theoretic
properties of a generating
flow $\phi_n$ are related to the probabilistic properties of the \sas\ process
$X(n)$.  In particular,
certain
ergodic-theoretic properties of the flow are found to be invariant from representation
to representation.

In Theorem 4.1 of \cite{rosinski1995structure} it was shown that the {\it Hopf \,decomposition} of a flow is
a representation-invariant property of stationary \sas\ processes. Specifically,
one has the disjoint union $E=C\cup D$ where the {\it dissipative} portion $D$ is the union of all wandering sets and the {\it conservative} portion $C$ contains no wandering subset. A {\it wandering} set is one such that $\{\phi_n(B)\}_{n\in\Z}$ are disjoint modulo sets of measure zero.  Since $C$ and $D$ are $\{\phi_n\}$-invariant, one can decompose a flow by looking at its restrictions to $C$ and $D$, and the decomposition is unique modulo sets of measure zero.
A nonsingular flow $\{\phi_n\}$ is said to conservative if $m(D)=0$ and dissipative if $m(C)=0$.

The following result appeared as Corollary 4.2 in \cite{rosinski1995structure} and has been adapted to the current context:
\begin{theorem}[Rosinski]\label{thm:rosinski}
Suppose $0<\aa<2$. A stationary \sas\ process is generated by a conservative (dissipative, respectively) flow if and only if for some (all) measurable spectral representation
$\{f_n\}_{n\in\R^+}\subset L^{\aa}(E,m)$ satisfying \eqref{cond:support}, the sum
\be
\sum_{n\in \Z} |f_n(x)|^{\aa}
\ee
is infinite (finite) $m$-a.e. on $E$.
\end{theorem}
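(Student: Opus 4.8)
The plan is to insert the canonical Rosinski representation \eqref{e:station.kernel} into the series and reduce the whole statement to the Hopf decomposition of the generating flow. Any measurable spectral representation satisfying \eqref{cond:support} may, by \cite{rosinski1995structure}, be taken of the form \eqref{e:station.kernel}, so I fix such a representation with flow $\{\phi_n\}$ and cocycle $\{a_n\}\subset\{-1,1\}$. Writing $u_n(x):=\frac{dm\circ\phi_n}{dm}(x)$ and $g:=|f_0|^\aa\in L^1_+(E,m)$, the fact that $|a_n(x)|^\aa\equiv 1$ gives
\be
\sum_{n\in\Z}|f_n(x)|^\aa=\sum_{n\in\Z}u_n(x)\,g(\phi_n(x))\qquad m\text{-a.e.}
\ee
So it suffices to prove that this two-sided Hopf series is finite $m$-a.e. on the dissipative part $D$ and infinite $m$-a.e. on the conservative part $C$ of $\{\phi_n\}$; the theorem then follows since being generated by a conservative flow means $m(D)=0$.

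For the dissipative part I would argue set-by-set on the wandering sets whose union is $D$. If $W\subset D$ is wandering, then the images $\{\phi_n(W)\}_{n\in\Z}$ are pairwise disjoint mod $m$, and the Radon--Nikodym change of variables gives $\int_W u_n(x)\,g(\phi_n(x))\,dm(x)=\int_{\phi_n(W)}g\,dm$. Summing over $n$ and using disjointness,
\be
\int_W\sum_{n\in\Z}u_n(x)\,g(\phi_n(x))\,dm(x)=\int_{\bigcup_n\phi_n(W)}g\,dm\le\|f_0\|_\aa^\aa<\infty,
\ee
so the series is finite a.e. on $W$, hence on $D$. This half uses neither conservativity nor \eqref{cond:support}.

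The conservative part is the crux. Here I would use that $\{\phi_n\}$ restricted to $C$ is conservative, so $m$-a.e. orbit in $C$ is recurrent, and that \eqref{cond:support} pins down $\supp g$, since $\supp\{f_n\}=\bigcup_{n\in\Z}\phi_n(\supp f_0)=E$ forces $m$-a.e. orbit to meet $\{g>0\}$. The delicate point is upgrading ``the orbit meets $\{g>0\}$ infinitely often'' to genuine divergence of $\sum_n u_n(x)g(\phi_n x)$. For this I would invoke the Hopf ratio ergodic theorem on the conservative component: comparing against a strictly positive reference $h\in L^1_+$, for which conservativity already yields $\sum_n u_n(x)h(\phi_n x)=\infty$ a.e. on $C$, the ratio $\sum_{|n|\le N}u_n g(\phi_n)/\sum_{|n|\le N}u_n h(\phi_n)$ converges a.e. to a limit that is strictly positive precisely because \eqref{cond:support} keeps $g$ non-null on every ergodic component of $C$; divergence of the numerator follows. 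I expect this comparison step---turning full support plus recurrence into divergence when $g$ is not strictly positive---to be the main obstacle.

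Finally, to settle the ``some/all'' quantifiers I would appeal to the representation-invariance of the Hopf decomposition (Theorem 4.1 of \cite{rosinski1995structure}). The computation above shows that, for each representation of the form \eqref{e:station.kernel} obeying \eqref{cond:support}, the series is infinite $m$-a.e. exactly when its generating flow is conservative. Since conservativity of the generating flow is an intrinsic, representation-free attribute of the process $X(n)$, infiniteness of the series for one admissible representation forces the generating flow to be conservative, which in turn forces infiniteness for every admissible representation; the dissipative/finite statement is the logical complement.
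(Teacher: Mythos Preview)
The paper does not prove this theorem at all: it is quoted as Corollary~4.2 of \cite{rosinski1995structure} and merely ``adapted to the current context,'' so there is no in-paper argument to compare against. Your outline is the standard route and is essentially how the result is obtained in \cite{rosinski1995structure}: insert the canonical representation \eqref{e:station.kernel}, reduce $\sum_n|f_n|^\aa$ to the Hopf-type sum $\sum_n u_n(x)\,g(\phi_n x)$ with $g=|f_0|^\aa\in L^1_+$, and then read off finiteness/infiniteness from the Hopf decomposition of the nonsingular flow. Your dissipative half is clean; for the conservative half, the key ergodic-theoretic input you sketch (that $\sum_n u_n(x)\,g(\phi_n x)=\infty$ $m$-a.e.\ on $C$ once the full-support condition \eqref{cond:support} forces $g$ to be non-null on every invariant set) is exactly the content behind Rosinski's corollary, and your appeal to the Hopf/Chacon--Ornstein ratio theorem with a strictly positive comparison $h\in L^1_+$ is the right way to make it precise. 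The ``some/all'' clause via the representation-invariance of the Hopf decomposition (Theorem~4.1 of \cite{rosinski1995structure}) is also how the original handles it.

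Two minor points worth tightening if you write this out in full. First, on the dissipative side you argue on a single wandering set $W$; to conclude on all of $D$ you need an exhaustion of $D$ by such sets (equivalently, a wandering set whose orbit sweeps out $D$), which is standard but should be said. Second, the ratio ergodic theorem is typically stated for one-sided sums; the two-sided version follows here because the flow is invertible, but it is worth noting that the limit $\E_\II g/\E_\II h$ is strictly positive $m$-a.e.\ on $C$ precisely because \eqref{cond:support} makes $\{g>0\}$ meet every set in the invariant $\sigma$-algebra $\II$ with positive measure---this is the step you correctly flag as the crux.
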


In \cite{samorodnitsky2005null}, another representation-invariant property of flows, the {\it positive-null} decomposition of stationary
\sas\ processes, was introduced.

A subset $B\subset E$ is called {\it weakly
wandering} if there is a subsequence with $n_0=0$ such that the
sets $\{\phi_{n_k} B\}_{k\in\N}$ are disjoint modulo sets of measure zero.
The null part $N$ of $E$ is the union of all weakly wandering sets, and the positive part $P$ contains no weakly wandering set.  Note that the positive part of $E$ is a subset of the conservative part, i.e. $P\subset C$.
Again, one can decompose $\{\phi_n\}$  by restricting to $P$ and ${N}$.
This decomposition is unique modulo sets of measure zero, and Theorem 2.1 of \cite{samorodnitsky2005null} states that the decomposition is representation-invariant modulo sets of measure zero. A null flow is one with $m(P)=0$ and a positive flow has $m(N)=0$.
Note that dissipative flows are automatically null flows, however in the case of conservative flows, both positive and null flows are possible.


\section{Ergodic properties of indicator fractional stable noise}\label{sec:ergodic}
Properties of a \sas-SSSI process $Y(t)$ are often deduced from its increment process $Z(n) = Y(n)-Y(n-1), n\in\N$ called a {\it stable noise}.  In this section,  we study the
ergodic-theoretic properties (which were introduced in the previous section) of {\it indicator fractional stable noise} (I-FSN) which we define as
\be
Z_A(n):= \int_{\Om'}\int_{\R} \1_{[0,A_{n}(\omega')]}(x)-\1_{[0,A_{n-1}(\omega')]}(x) M(d\omega', dx),\ n\in\N.
\ee
We note that in light of the proof of Theorem \ref{thm:SSSI}, one may deem it natural to instead use the kernel $$\sign(A_n(\omega'))\1_{[0,A_{n}(\omega')]}(x)-\sign(A_{n-1}(\omega'))\1_{[0,A_{n-1}(\omega')]}(x).$$  However, as seen in \eqref{eqn:sign}, the $\sign(A_t)$ has no affect on the distribution of the process and therefore has no affect on the distribution of its increments.

It is known that stationary \sas\ processes generated by dissipative flows are mixing \cite{surgailis1993stable}.  Concerning conservative flows,
Theorem 3.1 of \cite{samorodnitsky2005null} states that a stationary \sas\ process is ergodic if and only if it is generated by a null flow, and examples
are known of both mixing and non-mixing stationary \sas\ processes generated by conservative null flows (see Section 4 of \cite{gross1993ergodic}). Our next goal is to show that
I-FSN is mixing which implies that its flow is either dissipative or conservative null.  We first need a result which appeared as Theorem 2.7 of \cite{gross1994some}:

\begin{lemma}[A. Gross]
Suppose $X_n$ is some stationary \sas\ process, and assume $\{f_n\}\subset L^{\aa}(E,m)$ is a spectral representation of $X_n$ with respect to the control measure $m$.
Then $X_n$ is mixing if and only if for every compact $K\subset\R-\{0\}$ and
every $\eps>0$,
\be
\lim_{n\to \ff} m\{x:f_0\in K, |f_n|>\eps\}=0.
\ee
\end{lemma}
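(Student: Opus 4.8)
The plan is to characterize mixing through the asymptotic factorization of the finite-dimensional characteristic functions, using the exponential form of \sas\ characteristic functions, and then to match that analytic condition with the stated measure condition. Mixing of the stationary process $X_n$ is mixing of the associated shift, which — after approximating bounded cylinder functions by trigonometric polynomials — is equivalent to
\[
\E\exp\big(i\langle\theta,\mathbf X_0\rangle+i\langle\eta,\mathbf X_n\rangle\big)-\E\exp\big(i\langle\theta,\mathbf X_0\rangle\big)\,\E\exp\big(i\langle\eta,\mathbf X_n\rangle\big)\longrightarrow 0
\]
for all finite blocks $\mathbf X_0=(X_0,\dots,X_k)$, $\mathbf X_n=(X_n,\dots,X_{n+k})$. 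Writing $g=\sum_j\theta_j f_j$ and $h_n=\sum_j\eta_j f_{n+j}$ and invoking Proposition 3.4.1 of \cite{samorodnitsky1994stable}, the difference of the exponents equals $-\Phi(g,h_n)$, where $\Phi(g,h):=\int_E\big(|g+h|^\alpha-|g|^\alpha-|h|^\alpha\big)\,dm$. Thus mixing is equivalent to $\Phi(g,h_n)\to 0$ for all such $g,h_n$. To descend to the two-coordinate condition stated in the lemma I would use that the support of $g$ lies in $\cup_{j}\supp f_j$ and that of $h_n$ in $\cup_l\supp f_{n+l}$: the region where both are substantial sits inside a finite union of sets $\{f_j\ \mathrm{subst.},\ f_{n+l}\ \mathrm{subst.}\}$, and by the shift-invariance of the condition each such set has the same $m$-measure as $\{f_0\ \mathrm{subst.},\ f_{n+l-j}\ \mathrm{subst.}\}$, which vanishes as $n\to\ff$. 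Hence the pair condition for $(f_0,f_n)$ is exactly what is needed.

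Next I would record the pointwise behaviour of $\gamma(a,b):=|a+b|^\alpha-|a|^\alpha-|b|^\alpha$. It is continuous, homogeneous of degree $\alpha$, and vanishes when $a=0$ or $b=0$. Writing $\gamma(a,b)=|b|^\alpha\psi(a/b)$ with $\psi(u)=|u+1|^\alpha-|u|^\alpha-1$, one reads off $|\gamma(a,b)|\le c_\alpha|a|^\alpha$ when $\alpha\le 1$ and $|\gamma(a,b)|\le c_\alpha|a|\,|b|^{\alpha-1}$ when $\alpha>1$, for $|a|\le|b|$, with the symmetric bounds otherwise. In particular the integrand is small whenever one of $f_0,f_n$ is small relative to the other, and the genuine contribution comes from the set where both are substantial.

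For sufficiency, assume the measure condition and fix $\theta_1,\theta_2$. I would split $E$ into $\{|f_0|\le\delta\}$, $\{|f_n|\le\eps\}$, $\{|f_0|>R\}$, and the remaining overlap $\{\,\delta<|f_0|\le R,\ |f_n|>\eps\,\}$. On the first and third the integrand is controlled by the $\gamma$-bounds together with $f_0\in L^\alpha(E,m)$, so that $\int_{|f_0|\le\delta}$ and $\int_{|f_0|>R}$ are uniformly small in $n$; where $\alpha>1$ one applies Hölder, using that $\int|f_n|^\alpha\,dm=\int|f_0|^\alpha\,dm$ is constant by stationarity. On $\{|f_n|\le\eps\}$ the second factor is small. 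The overlap region is contained in $\{f_0\in K,\ |f_n|>\eps\}$ with $K=\{\delta\le|x|\le R\}$ compact away from $0$, so its $m$-measure tends to $0$ by hypothesis; there $|\gamma|$ is either bounded (for $\alpha\le1$, since $f_0\in K$) or bounded by $c|f_0|\,|f_n|^{\alpha-1}$, and in either case Hölder plus the vanishing measure and constant norm give $\int\to 0$. Sending $n\to\ff$, then $\delta\downarrow0$, $R\uparrow\ff$, $\eps\downarrow0$ yields $\Phi\to0$, and the block case follows by the reduction above.

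For necessity, assume mixing, so $\Phi(\theta_1 f_0,\theta_2 f_n)\to0$ for all $\theta_1,\theta_2$, and suppose the condition fails: there are $K$, $\eps$, a subsequence and $\kappa>0$ with $m\{f_0\in K,\ |f_n|>\eps\}\ge\kappa$. The main obstacle is to extract a non-vanishing lower bound on $|\Phi|$ from this persistent overlap while ruling out cancellation. For $\alpha\le1$ this is clean: even where $|f_n|$ is huge, $\gamma(\theta_1 f_0,\theta_2 f_n)\approx-|\theta_1 f_0|^\alpha$ is bounded away from $0$ in absolute value and of constant sign on $\{f_0\in K\}$, forcing $\liminf|\Phi|>0$. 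For $\alpha>1$ the extreme-ratio regime contributes the sign-varying term $\sim\theta_1 f_0\,|\theta_2 f_n|^{\alpha-1}\sign(f_n)$, so I would first cap $|f_n|$ at a level $R''$ for which the overlap $\{f_0\in K,\ \eps<|f_n|\le R''\}$ still carries mass bounded below, then choose $\theta_1,\theta_2$ so that $\theta_1 f_0$ and $\theta_2 f_n$ are comparable there, where $\gamma$ is bounded away from $0$ with controlled sign. Handling this comparability extraction and the possible cancellations in the $\alpha>1$ case is the step I expect to require the most care.
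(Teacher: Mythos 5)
First, a point of reference: the paper does not prove this lemma at all --- it is imported verbatim as Theorem 2.7 of \cite{gross1994some} --- so your attempt can only be judged on its own merits. Your skeleton (reduce mixing to factorization of finite-dimensional characteristic functions, pass to the exponents to get $\Phi(g,h_n)=\int_E(|g+h_n|^\alpha-|g|^\alpha-|h_n|^\alpha)\,dm\to0$, and control the integrand via the elementary bounds on $\gamma(a,b)$) is the right starting point, and your sufficiency sketch is essentially repairable. But two steps, as written, are genuine gaps. (A) The reduction from blocks to the single pair $(f_0,f_n)$ rests on ``the shift-invariance of the condition,'' i.e.\ $m\{f_j\ \mathrm{subst.},\,f_{n+l}\ \mathrm{subst.}\}=m\{f_0\ \mathrm{subst.},\,f_{n+l-j}\ \mathrm{subst.}\}$. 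This is false in general: by \eqref{e:station.kernel} the control measure $m$ is only \emph{quasi}-invariant under the flow underlying a stationary representation, so while $\|f_j\|_{L^\alpha(m)}$ is independent of $j$, the $m$-measures of level sets $\{|f_j|>\delta\}$ need not be. The fact that the two-dimensional factorization already implies mixing of a stationary S$\alpha$S process is itself a substantive theorem (part of what Gross proves, later extended to infinitely divisible processes by Rosinski and Zak) and cannot be obtained by translating sets.

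(B) The necessity direction is not closed, and the fix you propose for $\alpha>1$ --- ``choose $\theta_1,\theta_2$ so that $\theta_1 f_0$ and $\theta_2 f_n$ are comparable there'' --- cannot work: $\theta_1,\theta_2$ are global scalars while $f_0,f_n$ vary over the overlap set, and rescaling cannot defeat the real problem, which is cancellation in $\int\gamma\,dm$ between the regions $\{f_0f_n>0\}$ (where $\gamma>0$ for $\alpha>1$) and $\{f_0f_n<0\}$ (where $\gamma<0$); flipping or scaling the $\theta$'s only reweights the two contributions. Note also that your claim that $\alpha\le1$ is ``clean'' fails at $\alpha=1$, where $\gamma(a,b)=0$ identically on $\{ab>0\}$, so a persistent overlap with $f_0f_n>0$ is invisible to $\Phi(f_0,f_n)$. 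The standard repair, uniform in $\alpha\in(0,2)$, is to symmetrize: mixing gives both $\Phi(f_0,f_n)\to0$ and $\Phi(f_0,-f_n)\to0$, and the sum has integrand $|a+b|^\alpha+|a-b|^\alpha-2|a|^\alpha-2|b|^\alpha$, which for $\alpha<2$ is nonpositive, vanishes only on $\{ab=0\}$, and is bounded away from $0$ on $\{f_0\in K,\ \epsilon\le |f_n|\le R''\}$; single-signedness then converts ``integral tends to $0$'' into ``measure of the overlap tends to $0$'' after disposing of the tail $\{|f_n|>R''\}$ by a Chebyshev bound on $\|(f_0,f_n)\|^\alpha$. (Equivalently, one can argue via weak convergence of the bivariate spectral measures of $(X_0,X_n)$ to a measure concentrated on the coordinate axes, using uniqueness of spectral measures for $\alpha<2$.) Until (A) and (B) are repaired along some such lines, neither implication is established.
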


\begin{theorem}\label{thm:mixing}
Indicator fractional stable noise is a mixing process.
\end{theorem}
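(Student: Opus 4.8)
The plan is to apply A. Gross's criterion (the Lemma just stated) directly to a concrete spectral representation of indicator fractional stable noise. First I would identify the representation: the noise $Z_A(n)$ lives on $E = \Omega' \times \R$ with control measure $m = \P' \times \lambda$, and using the $\sign$-version of the kernel from \eqref{eqn:sign}, the spectral functions are
\be
f_n(\omega',x) = \sign(A_n(\omega'))\1_{[0,A_n(\omega')]}(x) - \sign(A_{n-1}(\omega'))\1_{[0,A_{n-1}(\omega')]}(x).
\ee
By the representation \eqref{sign_rep}, this equals $(A_n - x)_+^0 - (A_{n-1}-x)_+^0$, so $f_n(\omega',x)$ is nonzero precisely when $x$ lies strictly between $A_{n-1}(\omega')$ and $A_n(\omega')$, and on that set $|f_n| = 1$. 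Thus for any compact $K \subset \R\setminus\{0\}$ and any $\eps \in (0,1)$, the set $\{f_0 \in K,\ |f_n| > \eps\}$ is simply the set of $(\omega',x)$ for which $x$ lies between $A_{-1}$ and $A_0$ \emph{and} between $A_{n-1}$ and $A_n$ (using $K \subset \R \setminus \{0\}$ to force $|f_0| = 1$).

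The main computation is therefore to show
\be
\lim_{n\to\ff} (\P'\times\lambda)\Big\{(\omega',x): x\in [A_{-1},A_0] \cap [A_{n-1},A_n]\Big\} = 0.
\ee
Since $A_0 = 0$ almost surely, the first interval is $[A_{-1},0]$, which has $\P'$-a.s. finite $\lambda$-length $|A_{-1}|$ with $\E'|A_{-1}| < \infty$ (finiteness of the $\beta$-stable increment moment, $\beta > 1$, as in the proof of Theorem \ref{thm:SSSI}). I would integrate out $x$ first: the inner $\lambda$-measure is the length of the overlap $[A_{-1},A_0]\cap[A_{n-1},A_n]$, which is bounded by $|A_0 - A_{-1}| = |A_{-1}|$ and also bounded by the indicator that the two intervals intersect. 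By Fubini the quantity equals $\E'\big[\lambda([A_{-1},A_0]\cap[A_{n-1},A_n])\big]$. The strategy is a dominated-convergence argument: the integrand is dominated by the integrable random variable $|A_{-1}|$, so it suffices to show that for $\P'$-a.e.\ $\omega'$ the overlap length tends to $0$ as $n\to\ff$.

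The hard part will be establishing this almost-sure vanishing of the overlap, which is where the hypothesis that the discretized subordinator $\{A_n\}$ be recurrent (or, more to the point, that $|A_n| \to \ff$ along the increments) must be exploited. The interval $[A_{n-1},A_n]$ has bounded expected length but its \emph{location} drifts: because $A_t$ is a nondegenerate \sbs-SSSI process with $H' \in (0,1)$, the partial sums grow like $n^{H'}$, so $A_{n-1}, A_n \to \pm\ff$ and the moving interval eventually escapes the fixed compact interval $[A_{-1},0]$. I would make this rigorous by noting that $\min(|A_{n-1}|,|A_n|) \to \ff$ $\P'$-a.s.\ (self-similarity plus the strong law for SSSI processes, or the recurrence/transience-to-infinity of the increment process), so the overlap with the fixed interval near the origin is eventually empty almost surely; the overlap length is then $0$ for all large $n$, pointwise in $\omega'$. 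Combining this pointwise statement with the $|A_{-1}|$-domination and dominated convergence gives the limit, and Gross's Lemma then yields mixing. I would close by remarking, as the surrounding text anticipates, that mixing forces the generating flow to be either dissipative or conservative null, consistent with the ergodic dichotomy of Theorem \ref{thm:rosinski}.
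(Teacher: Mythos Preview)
Your reduction to Gross's criterion and your identification of the set $\{f_0\in K,\ |f_n|>\eps\}$ with the event that $x$ lies in both increment intervals are correct, and this matches the paper's setup. The gap is in the ``hard part'': the pointwise statement you need for dominated convergence is false. You claim $\min(|A_{n-1}|,|A_n|)\to\infty$ $\P'$-a.s., but the subordinating processes here satisfy the recurrence condition \eqref{recurr}, and for instance when $A_t$ is Brownian motion the integer-time sequence $(A_n)$ is a mean-zero finite-variance random walk, hence recurrent, so $\liminf_n |A_n|=0$ a.s. More to the point, since $\limsup A_n=+\infty$ and $\liminf A_n=-\infty$ a.s., the interval $[A_{n-1},A_n]$ crosses the fixed interval $[A_{-1},0]$ for infinitely many $n$, and on each such crossing the overlap length is bounded below (e.g.\ by $\min(|A_{-1}|,1)$ whenever $A_{n-1}<-1$ and $A_n>0$, which by the strong Markov property for BM happens i.o.). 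So the overlap does \emph{not} tend to $0$ for $\P'$-a.e.\ $\omega'$, and the dominated-convergence step breaks down. The appeals to ``self-similarity plus the strong law for SSSI processes'' do not rescue this: self-similarity gives $A_n\stackrel{d}{=}n^{H'}A_1$, which controls distributions but says nothing about almost-sure growth.

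The paper's proof avoids pathwise behavior entirely. It bounds the $(\P'\times\lambda)$-measure directly by a truncation at level $M$: the contribution from $|x|>M$ is controlled by the $\beta$-tail of $A_1$, while on $|x|\le M$ one uses self-similarity in the form $\P'(|A_n|\le M)=\P'(|A_1|\le M/n^{H'})\to 0$ together with a tail bound on the increment to handle the crossing event. Choosing $M$ large and then $n$ large makes both pieces small. This is essentially the only place where self-similarity enters, and it is used at the level of one-dimensional marginals, not sample paths. If you want to keep a dominated-convergence flavor, you would need convergence \emph{in probability} of the overlap to zero, and establishing that already requires an argument of the same type as the paper's truncation estimate.
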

\begin{proof}
Using the above lemma, it suffices to show that
\be
\lim_{n\to\ff}(\P'\times\lambda)\{(\omega',x):x\in[0,A_1],x\in[A_n,A_{n+1}]\}=0,
\ee
recalling that $[A_n,A_{n+1}]:=[A_{n+1},A_{n}]$ whenever $A_{n+1}<A_n$.

Let $c_i$ be constants such that for all $M>0$, $\P'(A_1>M)<c_1M^{-\bb}$ and
\be\label{eqn:tail}
\int_M^\ff\P'(A_1>x)\,dx<c_2M^{-\bb+1}
\ee where $\bb>1$.
Also, recall that $0<H'<1$ is the self-similarity exponent of $A_t$.
We have that
\bea \nn
&&(\P'\times\lambda)\{(\omega',x):x\in[0,A_1],x\in[A_n,A_{n+1}]\}\\
&=&(\P'\times\lambda)\{(\omega',x):|x|>M, x\in[0,A_1],x\in[A_n,A_{n+1}]\}\nn\\
&&+(\P'\times\lambda)\{(\omega',x):|x|\le M, x\in[0,A_1],x\in[A_n,A_{n+1}]\}\nn\\
&\le& 2\int_{M}^{\infty} \P'(A_1>x)\, dx + (\P'\times\lambda)\{(\omega',x):|x|\le M,x\in[A_n,A_{n+1}]\} \nn\\
&\le& 2c_2M^{-\bb+1} + 2M\sup_{x\in[-M,M]}\P'\{\omega':x\in[A_n(\omega'),A_{n+1}(\omega')]\} \nn\\
&\le& 2c_2M^{-\bb+1} + 2M \P'\(\{|A_n|\le M\}\cup \{|A_{n+1}|\le M\}\)\nn\\
&&+2M \P'\(\{A_n<-M, A_{n+1}>M\} \cup\{A_n>M, A_{n+1}<-M\}\)\nn\\
&\le& 2c_2M^{-\bb+1} +4M \P'\(|A_1|\le M/n^{H'}\) + 2M\cdot 2c_1M^{-\bb}.\label{eqn22}
\eea
where the first inequality uses the symmetry of $A_1$. The second inequality uses \eqref{eqn:tail}, and the third inequality uses the fact that for $x\in[-M,M]$, the event $\{\omega':x\in[A_n(\omega'),A_{n+1}(\omega')]\}$ is contained by the event that either $A_n$ or $A_{n+1}$ is in $[-M,M]$ or that $[A_n,A_{n+1}]$ (which we defined as equivalent to $[A_{n+1},A_n]$) contains $[-M,M]$. The final inequality uses both self-similarity and stationarity of increments.

Since the right side of \eqref{eqn22} can be made arbitrarily small by choosing $M$ and then $n$ appropriately, the result is proved.
\end{proof}

%

Since I-FSN is mixing, it is generated by a flow which is either dissipative or conservative null. Our next result classifies the flow of I-FSN as conservative if almost surely
\be\label{recurr}
\limsup_{n\to\infty} A_n = +\infty \quad\text{and}\quad \liminf_{n\to\infty} A_n = -\infty \quad\text{where }n\in\N.
 \ee
 This holds, for example, when $A_t$ is a FBM or a \sbs\ Levy motion with $\beta> 1$.


\begin{theorem}
If the subordinating process $A_t$ satisfies \eqref{recurr}, then the indicator fractional stable noise, $\{Z_A(n)\}_{n\in \Z}$, is generated by a conservative null flow.
\end{theorem}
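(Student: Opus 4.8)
The plan is to combine two facts. First, I-FSN is mixing by Theorem \ref{thm:mixing}, hence ergodic, and so by Theorem 3.1 of \cite{samorodnitsky2005null} it is generated by a null flow; this part is already in hand. Second, under the recurrence hypothesis \eqref{recurr} the generating flow is conservative. Putting these together yields a conservative null flow, so the only thing that remains to prove is conservativity. For that I would appeal to Rosinski's criterion (Theorem \ref{thm:rosinski}): it suffices to exhibit one measurable spectral representation $\{f_n\}$ obeying the full support condition \eqref{cond:support} for which $\sum_{n\in\Z}|f_n|^\aa=\infty$ $m$-a.e.

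First I would fix a convenient representation. Working on $E=\Om'\times\R$ with control measure $m=\P'\times\lambda$, and using the sign kernel from \eqref{eqn:sign} together with the identity \eqref{sign_rep}, the increment representation of I-FSN can be written as
\be
f_n(\omega',x)=\1_{\{A_n(\omega')>x\}}-\1_{\{A_{n-1}(\omega')>x\}},\qquad n\in\Z,
\ee
the constant term $\1_{\{x<0\}}$ cancelling in the difference. Since each $f_n$ takes values in $\{-1,0,1\}$, we have $|f_n|^\aa=|f_n|$, and $|f_n(\omega',x)|=1$ exactly when the binary sequence $k\mapsto\1_{\{A_k(\omega')>x\}}$ changes value between $k=n-1$ and $k=n$. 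Therefore
\be
\sum_{n\in\Z}|f_n(\omega',x)|^\aa=\#\lk n:\1_{\{A_n>x\}}\ne\1_{\{A_{n-1}>x\}}\rk,
\ee
which is precisely the number of times the path $\{A_n(\omega')\}$ crosses the level $x$.

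The heart of the argument, and the step I expect to be the main obstacle, is to show this crossing count is infinite $m$-a.e. I would fix $x\in\R$ and work for $\P'$-a.e.\ $\omega'$. The hypothesis \eqref{recurr} gives $\limsup_n A_n=+\infty$ and $\liminf_n A_n=-\infty$, so $A_n>x$ for infinitely many $n\ge1$ and $A_n<x$ for infinitely many $n\ge1$; hence $\1_{\{A_n>x\}}$ takes both values $0$ and $1$ infinitely often and therefore changes value infinitely often, forcing even the one-sided sum $\sum_{n\ge1}|f_n|^\aa$ to diverge, and a fortiori the full sum over $\Z$. Phrasing the count through $\1_{\{A_n>x\}}$ rather than through the raw segment $[A_{n-1},A_n]$ conveniently avoids any boundary ambiguity at $A_n=x$. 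An application of Fubini then promotes ``for each fixed $x$, $\P'$-a.e.\ $\omega'$'' to ``$(\P'\times\lambda)$-a.e.\ $(\omega',x)$.''

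Finally I would verify the full support condition \eqref{cond:support}. The set on which $f_n=0$ for every $n$ is exactly the set of $(\omega',x)$ for which $\1_{\{A_n>x\}}$ is constant in $n$; the divergence argument shows this set sits inside a $\P'$-null set of $\omega'$ crossed with $\R$, hence is $m$-null, so $\supp\{f_n\}=E$. Rosinski's criterion then yields that the flow is conservative, and combined with the null property coming from mixing, the flow is conservative null, as claimed.
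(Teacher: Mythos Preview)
Your proposal is correct and follows essentially the same route as the paper: verify Rosinski's divergence criterion by observing that under \eqref{recurr} the level $x$ is crossed infinitely often for $\P'$-a.e.\ $\omega'$ and every $x$, then combine conservativity with mixing (Theorem \ref{thm:mixing}) to obtain a null flow. Your use of the sign-kernel form $f_n=\1_{\{A_n>x\}}-\1_{\{A_{n-1}>x\}}$ and your explicit handling of Fubini and the full-support condition \eqref{cond:support} add welcome precision, but the substance is identical to the paper's argument.
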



\begin{proof}
%
%
By \eqref{recurr}, we have that $\P'$-almost surely
\bea
&&\sum_{n=0}^\infty |\1_{[0,A_{n}(\omega')]}(x)-\1_{[0,A_{n-1}(\omega')]}(x) |^\aa\nn\\
&&= \sum_{n=0}^\infty 1_{[A_n(\omega'),A_{n+1}(\omega')]}(x) =\ff \quad \text{for every }x.
\eea
Hence by Theorem \ref{thm:rosinski} we have that $Z_A(n)$ is generated by a conservative flow. By Theorem \ref{thm:mixing} the flow is also null.

\end{proof}
\vspace{1.8cm}
\noindent \emph{Remarks.}
\begin{enumerate}
\item
When $A_n$ satisfies \eqref{recurr}, the fact that I-FSMs are generated by conservative null flows implies they form a class of processes which are disjoint from the class of RH-FSMs (positive flows) and disjoint from the class of L-FSMs (dissipative flows). We have already seen that the classes of I-FSMs and LT-FSMs are disjoint when $1<\aa<2$ due to their self-similarity exponents.
\item
Another useful property of conservative flows comes from Theorem 4.1 of \cite{samorodnitsky2004extreme}: If $Z_A(n)$ is generated by a conservative flow, then it satisfies the following extreme value property:
\be
n^{-1/\aa} \max_{j=1,\ldots n} Z_A(n)\stackrel{p}{\to}0.
\ee
\end{enumerate}


%


\section*{Acknowledgements}
We wish to thank an anonymous referee for a careful reading and helpful comments.


\end{document}